\newtheorem{theorem}{Theorem}
\newtheorem{proposition}[theorem]{Proposition}
\newtheorem{corollary}[theorem]{Corollary}
\newtheorem{lemma}[theorem]{Lemma}
\theoremstyle{definition}
\newtheorem{condition}[theorem]{Condition}
\newtheorem{remark}[theorem]{Remark}
\newtheorem{example}[theorem]{Example}
\begin{document}

\title[Self-similar solutions to negative power curvature flow]{Closed self-similar solutions to flows by negative powers of curvature}
\author{Shanze Gao}
\address{School of Mathematics and Statistics \\  Shaanxi Normal University \\ Xi'an 710119, P. R. China}
\email{gaoshanze@snnu.edu.cn}

\keywords {self-similar solution, rigidity of hypersurface, warped product manifold}
\subjclass[2020]{53C24; 53C45; 53E10}

\begin{abstract}
In some warped product manifolds including space forms, we consider closed self-similar solutions to curvature flows whose speeds are negative powers of mean curvature, Gauss curvature and other curvature functions with suitable properties. We prove such self-similar solutions, not necessarily strictly convex for some cases, must be slices of warped product manifolds. A new auxiliary function is the key of the proofs.
\end{abstract}

%\thanks{The first author was supported by }
\maketitle

\section{Introduction}\label{sec1}

In this paper, a closed, immersed hypersurface $X:M^{n}\rightarrow \mathbb{R}^{n+1}$ ($n\geq 2$) satisfying equation
\begin{equation*}
F^{\beta}(\mathcal{W}(x))=\langle X(x),\nu(x) \rangle %\qquad \text{ for any }x\in M^{n}
\end{equation*}
is called a \emph{closed self-similar solution}, where $F$ is a suitable function of the shape operator $\mathcal{W}$ of $M^{n}$, $\beta\neq 0$ is a constant and $\nu$ denotes the unit outward normal vector. In fact, this hypersurface is corresponding to the self-similar solution to curvature flow which satisfies \[\partial_{t}X=-\mathrm{sign}(\beta)F^{\beta}\nu.\] 

Flow by negative powers of curvature (also called inverse curvature flow) refers to the flow with $\beta<0$ and $F(\mathcal{W})=f(\kappa(\mathcal{W}))$ where $f$ is a $1$-homogeneous (homogeneous of degree $1$) function of principal curvatures $\kappa=(\kappa_{1},\dots,\kappa_{n})$. This type of flow has been studied by many researchers (see \cite{Gerhardt90,Urbas90,Urbas91,Gerhardt14,Li-Wang-Wei19,Wei,Jin-Wang-Wei} etc.). Urbas \cite{Urbas99} also considered noncompact self-similar solutions to flow by negative powers of Gauss curvature.

Self-similar solutions to mean curvature flow and Gauss curvature flow have been widely studied. Although closed self-similar solution is not unique in general (see example of Angenent \cite{Angenent}), it must be a sphere provided suitable conditions. Huisken \cite{Huisken90} showed that any closed self-similar solution to mean curvature flow (also known as self-shrinker) must be a sphere if mean curvature is nonnegative. Colding-Minicozzi \cite{Colding-Minicozzi12} introduced a variational characterization of self-shrinker and proved spheres are the only closed $F$-stable self-shrinkers. For flow by powers of Gauss curvature, Brendle-Choi-Daskalopoulos \cite{BCD17} proved closed and strictly convex self-similar solutions must be spheres when the power $\alpha>1/(n+2)$. For curvature flow with general $F$ and positive power $\beta$, uniqueness results of closed self-similar solution were obtained by McCoy \cite{McCoy11,McCoy21}, Gao-Li-Ma \cite{GLM18} and Gao-Li-Wang \cite{GLW22} etc.

The concept of self-similar solution can be extended to hypersurfaces in warped product manifolds. Let $\overline{M}^{n+1}=[0,\bar{r})\times_{\lambda} N^{n}$ be a warped product manifold with metric \[\bar{g}=dr\otimes dr+\lambda^{2}(r)g_{\scriptscriptstyle N},\]
where $(N^{n},g_{\scriptscriptstyle N})$ is a closed Riemannian manifold and $\lambda(r)$ is a smooth, positive function of $r\in (0,\bar{r})$. It is known that $\lambda(r)\partial_{r}$ is a conformal vector field on $\overline{M}^{n+1}$. A hypersurface $M^{n}$ in $\overline{M}^{n+1}$ is also called a self-similar solution if it satisfies
\begin{equation*}
F^{\beta}(\mathcal{W}(x))=\bar{g}\big(\lambda(r(x))\partial_{r}(x),\nu(x)\big).
\end{equation*}
In fact, we can generate a family of hypersurfaces by $M^{n}$ and $\lambda\partial_{r}$ which satisfies the equation of corresponding curvature flow (see \cite{Alias-deLira-Rigoli20,GM19} for details). The ambient space $\overline{M}$ is actually Euclidean space $\mathbb{R}^{n+1}$, sphere $\mathbb{S}^{n+1}$ or hyperbolic space $\mathbb{H}^{n+1}$ if $N^{n}=\mathbb{S}^{n}$ and $\lambda(r)=r,\sin r\text{ or }\sinh r$ correspondingly.

Self-shrinkers in warped product manifolds were studied by Wu \cite{WuG17}, Alias-de Lira-Rigoli \cite{Alias-deLira-Rigoli20} etc. Ma and the author \cite{GM19} considered closed self-similar solutions to flow with some general $F$ and positive power $\beta$ in warped product manifolds. And they proved uniqueness of solutions if the ambient space is a hemisphere. Later, Gao-Li-Wang \cite{GLW22} extended the uniqueness result to more general $F$ which is an inverse concave function of principal curvatures.

For flow by negative powers of the $k$-th mean curvature ($k<n$), Ma and the author \cite{GM21} proved uniqueness of closed, strictly convex self-similar solutions in a class of warped product manifolds. 

In this paper, we consider closed self-similar solutions which are not necessarily strictly convex. A hypersurface is called \emph{mean convex} if its mean curvature $H=\frac{1}{n}(\kappa_{1}+\cdots+\kappa_{n})$ is positive everywhere. 

\begin{theorem}\label{Th:H}
Suppose that $\overline{M}^{n+1}=[0,\bar{r})\times_{\lambda} N^{n}$ is a warped product manifold satisfying
\begin{equation*}
\mathrm{Ric}_{\scriptscriptstyle N}\geq (n-1)(\lambda'^{2}-\lambda\lambda'')g_{\scriptscriptstyle N},
\end{equation*}
and $M^{n}$ be a closed, immersed hypersurface in $\overline{M}^{n+1}$. If $M^{n}$ is mean convex and satisfies
\begin{equation*}
H^{-\alpha}=\bar{g}(\lambda\partial_{r},\nu),
\end{equation*}
where $\alpha>0$ is a constant, then $M^{n}$ is a slice $\{ r_{0}\}\times N^{n} $ for some $r_{0}\in (0,\bar{r})$. 
\end{theorem}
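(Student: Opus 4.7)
Set $V=\lambda\partial_r$ and let $\Phi(r)=\int_0^r\lambda(s)\,ds$, so that $\bar\nabla\Phi=V$ and $\bar\nabla^2\Phi=\lambda'\bar g$ on $\overline M$. Restricting to $M$, the tangential part $V^T=\nabla_M\Phi$ satisfies the standard warped-product identity
\begin{equation*}
\Delta_M\Phi \;=\; n(\lambda'-Hu),\qquad u:=\bar g(V,\nu),
\end{equation*}
together with $\nabla_i u=-h_i^{\,j}V_j^T$. Mean convexity and the self-similar relation $u=H^{-\alpha}$ give $u>0$, so $H$ is bounded away from $0$. The hypothesis $\mathrm{Ric}_N\ge (n-1)(\lambda'^2-\lambda\lambda'')g_N$ translates, via the curvature formulas for a warped product, into a one-sided bound on $\bar{\mathrm{Ric}}(\nu,\nu)$ and on the sectional curvature terms $\bar R(\nu,e_i,\nu,e_j)$ that appear when the Codazzi equation is used to commute derivatives of $h_{ij}$; this will be the mechanism that gives the final second-order computation a definite sign.

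The key step, advertised in the abstract, is to introduce a new auxiliary function $W$ on $M$ adapted to the negative-power equation. Guided by the positive-power analogues used by Ma and the author in \cite{GM19,GM21}, a natural candidate is
\begin{equation*}
W \;=\; \Phi \;-\; \tfrac{\alpha}{1+\alpha}\, u^{(1+\alpha)/\alpha}
\;=\; \Phi \;-\; \tfrac{\alpha}{1+\alpha}\, H^{-(1+\alpha)},
\end{equation*}
or a closely related combination in which the exponent of $u$ is chosen so that the term $-nHu$ in $\Delta\Phi$ is exactly cancelled by the term produced when $\nabla u=-\mathcal{W}(V^T)$ is inserted into $\Delta u^{(1+\alpha)/\alpha}$. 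Using the Simons-type identity for $\Delta H$ in the warped-product ambient and substituting $u=H^{-\alpha}$ wherever possible, I would try to massage $\Delta_M W$ into the form
\begin{equation*}
\Delta_M W \;\ge\; \frac{c}{H^{p}}\,\bigl|h_{ij}-Hg_{ij}\bigr|^{2},
\end{equation*}
with $c>0$, where the Ricci assumption absorbs the ambient curvature terms with the correct sign and mean convexity supplies the positive weight $H^{-p}$. Closedness of $M$ and the divergence theorem then force $\Delta_M W\equiv 0$, hence $h_{ij}=Hg_{ij}$, so $M$ is totally umbilic.

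Once $M$ is umbilic, the self-similar equation and umbilicity propagate constancy: taking tangential derivatives of $u=H^{-\alpha}$ together with $\nabla u=-\mathcal{W}(V^T)=-HV^T$ shows that $H$ (and hence $u$) is constant on $M$; combined with the identity $|\nabla_M r|^2=1-u^2/\lambda(r)^2$, this forces $\lambda(r)$ constant on $M$, so $r$ is constant and $M=\{r_0\}\times N^n$. The main obstacle I anticipate is the choice of $W$: in the negative-power regime, the naive transplants of the positive-power auxiliary functions become singular at points where $\lambda'-Hu$ changes sign, and the exponents of $\Phi$, $u$, and $H$ must be balanced so that (a) the linear-in-$H$ terms cancel after using $u=H^{-\alpha}$, (b) the Ricci lower bound enters with the correct sign, and (c) no use is made of strict convexity, since only $H>0$ is available. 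Choosing $W$ so that all three constraints hold simultaneously is the delicate point; once $W$ is identified, the remaining computation is a matter of warped-product tensor calculus and an application of the maximum principle.
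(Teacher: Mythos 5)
You have guessed the paper's auxiliary function exactly: $P=\Phi-\frac{\alpha}{\alpha+1}u^{\frac{\alpha+1}{\alpha}}$ is precisely what the paper uses. The gap is in the mechanism you propose for exploiting it. Computing $\Delta P$ does \emph{not} yield a pointwise inequality of the form $\Delta P\geq c\,H^{-p}|h_{ij}-Hg_{ij}|^{2}$: besides the Cauchy--Schwarz term $u^{\frac{\alpha+1}{\alpha}}(|h|^{2}-nH^{2})$ and an ambient curvature term, the computation inevitably produces the first-order term $\tfrac{n}{\alpha}\bar g(\lambda\partial_{r},\nabla\log u)$, which has no sign. The paper absorbs it by writing $\lambda\partial_{r}^{T}=\nabla P+u^{\frac{1}{\alpha}}\nabla u$, turning it into a drift term plus a \emph{good} term, so that $\Delta P-\tfrac{n}{\alpha}\bar g(\nabla P,\nabla\log u)\geq \tfrac{n-1}{\alpha}u^{\frac{\alpha+1}{\alpha}}|\nabla\log u|^{2}\geq 0$, and then applies the strong maximum principle to this drift inequality; your step ``closedness and the divergence theorem force $\Delta W\equiv 0$'' is not available, since $\Delta P$ alone is not nonnegative and $\int_M \bar g(\lambda\partial_r,\nabla\log u)$ is not signed. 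Relatedly, you misidentify how the Ricci hypothesis enters: no Simons-type identity for $\Delta H$ and no bound on $\bar{\mathrm{Ric}}(\nu,\nu)$ is used; the only ambient term appearing (via Codazzi in $\nabla_{i}\nabla_{j}u$) is the mixed term $\overline{\mathrm{Ric}}(\nu,\lambda\partial_{r}^{T})$, and the hypothesis is exactly what makes it nonpositive (Lemma \ref{Th:Rivnu}).

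Your endgame through umbilicity is also both unnecessary and insufficiently justified. The paper never proves $h_{ij}=Hg_{ij}$: once $P$ is constant, the leftover term $\tfrac{n-1}{\alpha}u^{\frac{\alpha+1}{\alpha}}|\nabla\log u|^{2}$ (which your target inequality discards) forces $\nabla u\equiv 0$ for $n\geq 2$, and then $\lambda\partial_{r}^{T}=\nabla P+u^{\frac{1}{\alpha}}\nabla u=0$ gives $r$ constant directly. By contrast, in the generality of Theorem \ref{Th:H} (only a Ricci lower bound on $N$), ``umbilic'' does not by itself imply ``slice'', and your sketch that tangential differentiation of $u=H^{-\alpha}$ makes $H$ constant ignores the ambient curvature terms entering through Codazzi; even in space forms, umbilic closed hypersurfaces are arbitrary geodesic spheres and one must use the equation to pin down the center. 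Finally, a small but consequential sign slip: with your own convention $\nabla_{i}\nabla_{j}\Phi=\lambda'\delta_{ij}-uh_{ij}$ one has $\nabla_{j}u=+h_{jl}\,\bar g(\lambda\partial_{r},e_{l})$, not the minus sign you wrote, and this sign is what produces the exact cancellation your choice of exponent is designed to achieve.
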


\begin{remark}
Compared with Corollary 2 in \cite{GM21}, instead of strictly convex, $M^{n}$ is mean convex in the above theorem. It is easy to check that $\mathbb{R}^{n+1}$, $\mathbb{S}^{n+1}$ and $\mathbb{H}^{n+1}$ satisfy the assumption of $\overline{M}$.
\end{remark}

Let $\Gamma\subset\mathbb{R}^{n}$ be an open, convex, symmetric cone with vertex at the origin, which contains the positive cone $\Gamma_{+}=\{(\kappa_1,\ldots,\kappa_n)\in \mathbb{R}^n:~\kappa_i>0~\text{ for any }~i=1,\dots,n\}$.

\begin{condition}\label{condtn}
We assume that $F(\mathcal{W})=f(\kappa(\mathcal{W}))$ satisfies the following properties in $\Gamma$.
\begin{itemize}
\item[(i)] $f$ is a smooth, symmetric function of the eigenvalues $\kappa$ of $\mathcal{W}$.
\item[(ii)] $f$ is positive in $\Gamma$ and normalized such that $f(1,\dots,1)=1$.
\item[(iii)] $f$ is strictly increasing in each argument, i.e., $\frac{\partial f}{\partial\kappa_{i}}>0$ in $\Gamma$ for any $i=1,\dots,n$.
\item[(iv)] $f$ is $1$-homogeneous (homogeneous of degree $1$), i.e., $f(s\kappa)=sf(\kappa)$ for any $s>0$ and $\kappa\in\Gamma$.
\item[(v)] The following inequalities hold in $\Gamma$:
\begin{equation*}
\sum_{i=1}^{n}\frac{\partial f}{\partial\kappa_{i}}\geq 1,\qquad \sum_{i=1}^{n}\frac{\partial f}{\partial\kappa_{i}}\kappa_{i}^{2}\geq f^{2},
\end{equation*}
and $f-\frac{\partial f}{\partial\kappa_{i}}\kappa_{i}\geq 0$ for any $i=1,\dots,n$.
\end{itemize}
\end{condition}

\begin{theorem}\label{Th:mainthm}
Suppose that $\overline{M}^{n+1}=[0,\bar{r})\times_{\lambda} N^{n}$ is a warped product manifold,  
where $(N,g_{\scriptscriptstyle N})$ is a closed Riemannian manifold with constant sectional curvature $c$ and $\lambda(r)$ satisfies $\lambda'(r)>0$ and
\begin{equation}\label{Eq:C4>=}
\frac{\lambda(r)''}{\lambda(r)}+\frac{c-\lambda(r)'^{2}}{\lambda(r)^{2}}\geq 0.
\end{equation}
Let $M^{n}$ be a closed, immersed hypersurface in $\overline{M}^{n+1}$
satisfying
\begin{equation*}
F^{-\alpha}=\bar{g}(\lambda\partial_{r},\nu),
\end{equation*}
where $\alpha>0$ is a constant. If principal curvatures $\kappa$ of $M^{n}$ are in a cone $\Gamma$ such that Condition \ref{condtn} holds, then $M^{n}$ is a slice $\{r_{0}\}\times N^{n}$ for some $r_{0}\in (0,\bar{r})$.
\end{theorem}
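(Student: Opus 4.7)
The plan is to construct a non-negative scalar function $Q$ on $M^{n}$ vanishing identically precisely on slices, compute $F^{ij}\nabla_{i}\nabla_{j}Q$, and invoke the maximum principle on the closed manifold $M^{n}$ to force $Q\equiv 0$.

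I would begin by recording the identities attached to the conformal vector field $V=\lambda(r)\partial_{r}$ (satisfying $\bar\nabla_{X}V=\lambda' X$). With $u=\bar g(V,\nu)=F^{-\alpha}$ and $V^{T}=V-u\nu$,
\[
\nabla_{i}u=h_{ij}V^{Tj},\qquad \nabla_{i}V^{T}_{j}=\lambda'g_{ij}+uh_{ij},\qquad |V^{T}|^{2}=\lambda^{2}-u^{2}.
\]
Differentiating $F=u^{-1/\alpha}$ yields $\nabla_{i}F=-\tfrac{1}{\alpha}F^{\alpha+1}h_{ij}V^{Tj}$; the Cauchy--Schwarz inequality gives $0<u\leq\lambda$, so $|V^{T}|^{2}\geq 0$ with equality exactly where $V$ is normal to $M^{n}$. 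The third identity suggests the auxiliary function
\[
Q:=|V^{T}|^{2}=\lambda^{2}-u^{2}\geq 0,
\]
which vanishes identically exactly on a slice.

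The core of the proof is the computation of $F^{ij}\nabla_{i}\nabla_{j}Q$. Three ingredients enter crucially. First, the Codazzi equation in a warped product whose fibre has constant sectional curvature $c$ generates an ambient-curvature correction proportional to $\tfrac{\lambda''}{\lambda}+\tfrac{c-\lambda'^{2}}{\lambda^{2}}$, which is non-negative by \eqref{Eq:C4>=}. Second, the three inequalities of Condition~\ref{condtn}(v) are used in concert: $\sum F_{i}\geq 1$ controls the $\lambda'$-trace arising from $\nabla^{2}\lambda$, $\sum F_{i}\kappa_{i}^{2}\geq F^{2}$ is applied to the Simons-type term $F^{ij}h_{i}{}^{k}h_{jk}$ appearing in $\nabla^{2}u$, and $f-f_{i}\kappa_{i}\geq 0$ handles the pairings of $V^{T}$ with $\nabla F$ that emerge from Codazzi. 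Third, the self-similar relation $uF^{\alpha+1}=F$ converts the $\nabla F$ contribution into a term of the form $\tfrac{1}{\alpha}F\,h(V^{T},V^{T})$ that can be absorbed by the preceding bounds. The anticipated output is a differential inequality of the form
\[
F^{ij}\nabla_{i}\nabla_{j}Q+b^{k}\nabla_{k}Q\;\geq\;c_{0}\,Q
\]
on $M^{n}$, with $c_{0}>0$ and a smooth drift $b$. At a maximum of $Q$, where $\nabla Q=0$ and $F^{ij}\nabla_{i}\nabla_{j}Q\leq 0$, this forces $Q(\max)\leq 0$; combined with $Q\geq 0$, we conclude $Q\equiv 0$. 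Since $V^{T}\equiv 0$ is equivalent to $r$ being constant on $M^{n}$, we get $M^{n}=\{r_{0}\}\times N^{n}$ for the $r_{0}$ determined by $\lambda'(r_{0})^{\alpha}=\lambda(r_{0})^{\alpha-1}$.

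The main obstacle is the algebraic bookkeeping. The Codazzi-derived ambient curvature correction has to align exactly with the non-negative quantity in \eqref{Eq:C4>=}; the a priori badly-signed term $-2u^{2}F^{ij}h_{i}{}^{k}h_{jk}$, whose only natural estimate $\sum F_{i}\kappa_{i}^{2}\geq F^{2}$ goes the wrong way, must be cancelled against the positive $2\lambda'^{2}\sum_{i}F_{i}$ and the mixed $V^{T}$-terms through a careful rearrangement using all three inequalities of Condition~\ref{condtn}(v) simultaneously; and the equality cases of these inequalities must characterise the umbilic configuration $\kappa_{i}=\lambda'/\lambda$. Designing $Q$ (and the weighted operator $F^{ij}\nabla_{i}\nabla_{j}+b^{k}\nabla_{k}$) so that every uncontrolled contribution is absorbed is the role of the "new auxiliary function" mentioned in the abstract, and is the delicate step of the argument.
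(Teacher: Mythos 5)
Your overall strategy (an auxiliary function, a contracted second-derivative computation, Condition \ref{condtn}(v) plus \eqref{Eq:C4>=}, maximum principle) is the right family of ideas, but the proof has a genuine gap at its core: the differential inequality $F^{ij}\nabla_i\nabla_jQ+b^{k}\nabla_kQ\geq c_0Q$ with $c_0>0$ for $Q=|V^{T}|^{2}=\lambda^{2}-u^{2}$ is asserted, not derived, and the term structure indicates it cannot be obtained along the lines you sketch. Carrying out the computation with \eqref{Eq:DDu}, \eqref{Eq:DDPhi} and \eqref{Eq:Rnu} gives, in a principal frame,
\begin{align*}
\mathcal{L}Q&=2\lambda\lambda''F^{ij}r_ir_j+2\lambda'^{2}\textstyle\sum_iF^{ii}-4\lambda'uF+2u^{2}\sum_if^{i}\kappa_i^{2}\\
&\quad-2\textstyle\sum_if^{i}\kappa_i^{2}(V^{T}_i)^{2}+\frac{2}{\alpha}F\sum_i\kappa_i(V^{T}_i)^{2}-2uF^{ij}\bar{R}_{\nu jli}V^{T}_l .
\end{align*}
The zeroth-order block combines, via Condition \ref{condtn}(v), into $2(\lambda'-uF)^{2}\geq0$, which is not a multiple of $Q$; contrary to your accounting, the Simons-type term enters with a \emph{good} sign, and the genuinely dangerous terms are $-2\sum_if^{i}\kappa_i^{2}(V^{T}_i)^{2}$, the $\frac{1}{\alpha}$-term when some $\kappa_i<0$ (allowed in $\Gamma$), and above all $2\lambda\lambda''F^{ij}r_ir_j$. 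Your plan relies on the ambient-curvature correction being ``non-negative by \eqref{Eq:C4>=}'' to control these, but in exactly the model cases $\mathbb{R}^{n+1},\mathbb{S}^{n+1},\mathbb{H}^{n+1}$ inequality \eqref{Eq:C4>=} is an \emph{equality}, so that term vanishes identically, while in $\mathbb{S}^{n+1}$ one has $\lambda\lambda''=-\sin^{2}r<0$, leaving an uncompensated negative multiple of $F^{ij}r_ir_j$. The drift can only act through $\nabla_kQ=2(\lambda'-u\kappa_k)V^{T}_k$, which is powerless at points where $\lambda'=u\kappa_k$, and the only term capable of producing $+c_0Q$ carries the factor $\frac{1}{\alpha}$, so no coefficient $c_0>0$ uniform in $\alpha$ can emerge. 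Thus the ``algebraic bookkeeping'' you defer is not bookkeeping: with this choice of $Q$ the scheme breaks down.

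The paper resolves exactly this difficulty by a different auxiliary function, $P=\Phi-\frac{\alpha}{\alpha+1}u^{\frac{\alpha+1}{\alpha}}$ with $\Phi'=\lambda$, for which Lemma \ref{Th:LP} gives $\mathcal{L}P$ in a form where, after adding the drift $\frac{1}{\alpha}u^{-\frac{\alpha+1}{\alpha}}\bar g(\lambda\partial_r,\nabla P)-\frac{1}{\alpha}F^{ij}\nabla_i\log u\,\nabla_jP$, the right-hand side becomes $\lambda'(\sum_if^{i}-1)+u^{\frac{\alpha+1}{\alpha}}(\sum_if^{i}\kappa_i^{2}-f^{2})+\frac{\lambda^{2}}{\alpha u}\sum_i(f-f^{i}\kappa_i)r_i^{2}$ plus the ambient term, each controlled by one of the three inequalities in Condition \ref{condtn}(v) and by \eqref{Eq:C4>=}; here there is no $\lambda''$-term at all (only $\lambda'>0$ enters), which is precisely what your $Q$ cannot avoid. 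One then concludes only that $P$ is constant (a $\geq0$ inequality and the strong maximum principle), and extracts $\partial_r^{T}=0$ from the vanishing of the nonnegative terms and of $\nabla P$, rather than from a strict zeroth-order term. I recommend you redo the argument with this $P$ (or explain how the negative $\lambda\lambda''$- and $F^{ij}\nabla_iu\nabla_ju$-contributions are absorbed for your $Q$, which at present they are not).
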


\begin{remark}
The assumption of the ambient space $\overline{M}$ in the above theorem is stronger than it in Theorem \ref{Th:H}. In fact, inequality \eqref{Eq:C4>=} implies \begin{align*}
(\lambda'^{2}-\lambda\lambda'')g_{\scriptscriptstyle N}\leq cg_{\scriptscriptstyle N}=\frac{1}{n-1}\mathrm{Ric}_{\scriptscriptstyle N}.
\end{align*}
It can be checked that space forms $\mathbb{R}^{n+1}$, $\mathbb{S}^{n+1}$ and $\mathbb{H}^{n+1}$ still satisfy the assumption and readers may refer to \cite{Brendle13} for more spaces satisfying \eqref{Eq:C4>=}.
\end{remark}

Let $\sigma_{k}(\kappa)$ denote the $k$-th elementary symmetric polynomial of principal curvatures $\kappa=(\kappa_{1},...,\kappa_{n})$ of hypersurface $M^{n}$, i.e.,
\begin{equation*}
\sigma_{k}(\kappa)=\sum_{1\leq i_{1}<...<i_{k}\leq n}\kappa_{i_{1}}\cdots\kappa_{i_{k}}.
\end{equation*}
The $k$-th mean curvature is defined by $H_{k}=\sigma_{k}(\kappa)/\binom{n}{k}$.

The following corollary of Theorem \ref{Th:mainthm} holds under weaker convexity assumption of $M^{n}$ (compared with Corollary 5 in \cite{GM21}).
\begin{corollary}\label{Th:Hk}
Suppose $\overline{M}^{n+1}$ is the same in Theorem \ref{Th:mainthm} and $M^{n}$ is a closed, immersed hypersurface in $\overline{M}^{n+1}$. If $M^{n}$ satisfies  $H_{k+1}>0$ and
\begin{equation*}
H_{k}^{-\alpha}=\bar{g}(\lambda\partial_{r},\nu),
\end{equation*}
where $\alpha>0$ is a constant and $2\leq k\leq n-1$, then $M^{n}$ is a slice $\{r_{0}\}\times N^{n}$ for some $r_{0}\in (0,\bar{r})$.
\end{corollary}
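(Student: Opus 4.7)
The plan is to reduce the corollary to Theorem \ref{Th:mainthm} by taking $f(\kappa):=H_k^{1/k}=\binom{n}{k}^{-1/k}\sigma_k^{1/k}$ and setting $F=f\circ\kappa$ on the G\aa rding cone $\Gamma:=\Gamma_{k+1}$. Since $F^{k}=H_k$, the defining equation $H_k^{-\alpha}=\bar g(\lambda\partial_r,\nu)$ becomes $F^{-\alpha k}=\bar g(\lambda\partial_r,\nu)$, which is of the form assumed in Theorem \ref{Th:mainthm} with positive exponent $\alpha':=\alpha k$.

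The first substantive step is to verify that $\kappa(M)\subset\Gamma_{k+1}$. The equation forces $H_k>0$ on $M^n$, and $H_{k+1}>0$ by hypothesis. At a point $p\in M^n$ where $r|_{M}$ attains its maximum, $M^n$ is tangent from inside to the slice $\{r=r(p)\}\times N^n$ with $\nu(p)=\partial_r$, so each principal curvature of $M^n$ at $p$ dominates the corresponding one of the slice, namely $\lambda'(r(p))/\lambda(r(p))>0$; hence $\kappa(p)\in\Gamma_+$. Since $M^n$ is connected and $\kappa(M)\subset\{\sigma_k>0\}\cap\{\sigma_{k+1}>0\}$, the curvatures must remain in the connected component of this set containing $\Gamma_+$, which is $\Gamma_{k+1}$.

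The second step is to check that $f$ satisfies Condition \ref{condtn} on $\Gamma_{k+1}$. Properties (i)--(iv) (smoothness, symmetry, normalized positivity, strict monotonicity from G\aa rding's theorem, and 1-homogeneity) are classical for $\sigma_k^{1/k}$. For (v), a direct computation gives $\sum_i f_i=H_{k-1}/H_k^{(k-1)/k}\geq 1$ by MacLaurin. The second inequality $\sum_i f_i\kappa_i^2\geq f^2$ is obtained by combining the identity $\sum_i\sigma_{k-1;i}\kappa_i^2=\sigma_1\sigma_k-(k+1)\sigma_{k+1}$ (Newton) with the MacLaurin inequalities on $\Gamma_{k+1}$. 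For the pointwise inequality $f-f_i\kappa_i\geq 0$, the decomposition $\sigma_k=\kappa_i\sigma_{k-1;i}+\sigma_{k;i}$ together with $f_i=\sigma_{k-1;i}/(k\binom{n}{k}f^{k-1})$ reduces the claim to $(k-1)\sigma_k+\sigma_{k;i}\geq 0$, which holds because $\sigma_{k;i}>0$ on $\Gamma_{k+1}$ (a standard fact: $\kappa\in\Gamma_{k+1}$ implies that $\hat\kappa_i=(\kappa_j)_{j\neq i}$ lies in the $(n-1)$-dimensional G\aa rding cone $\Gamma_k(n-1)$, so $\sigma_k(\hat\kappa_i)>0$).

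With $\kappa(M)\subset\Gamma_{k+1}$ and Condition \ref{condtn} verified, Theorem \ref{Th:mainthm} applies and yields $M^n=\{r_0\}\times N^n$. The main obstacle is the second inequality of (v): the naive Cauchy--Schwarz bound $f^2\leq(\sum_i f_i)(\sum_i f_i\kappa_i^2)$ runs the wrong way once $\sum_i f_i\geq 1$ is known, so the sharper Newton--MacLaurin estimate on $\Gamma_{k+1}$ (which is exactly where the extra assumption $H_{k+1}>0$, as opposed to mere $H_k>0$, is used) is essential.
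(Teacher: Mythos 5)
Your proof is correct and follows essentially the same route as the paper: reduce to Theorem~\ref{Th:mainthm} with $F=H_k^{1/k}$, show $\kappa(M)\subset\Gamma_{k+1}$, and verify Condition~\ref{condtn} on that cone via the same Newton--MacLaurin computation the paper records in Example~\ref{Ex:Hk}. The only substantive differences are cosmetic: you re-derive the inclusion $\kappa(M)\subset\Gamma_{k+1}$ by the standard touching-slice-at-a-maximum-of-$r$ argument plus connectedness of the component of $\{\sigma_{k+1}>0\}$ containing $\Gamma_+$, whereas the paper simply cites Lemma~2.3 of Li--Wei--Xiong for this; and you verify the inequality $f-f_i\kappa_i\geq 0$ directly on $\Gamma_{k+1}$ using $\sigma_{k;i}>0$, whereas the paper states it on the slightly larger cone $\widetilde\Gamma_k\supset\Gamma_{k+1}$ using the defining inequality $\sigma_k(\kappa|i)>-(k-1)\sigma_k(\kappa)$. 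Either choice of cone is fine since the hypothesis puts $\kappa(M)$ in $\Gamma_{k+1}$ anyway.
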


If $M^{n}$ is strictly convex, we have the following corollaries from Theorem \ref{Th:mainthm}.

\begin{corollary}\label{Th:K}
Suppose $\overline{M}^{n+1}$ is under the same assumption of Theorem \ref{Th:mainthm} and $M^{n}$ is a closed, immersed hypersurface in $\overline{M}^{n+1}$. If $M^{n}$ is strictly convex and satisfies
\begin{equation*}
K^{-\alpha}=\bar{g}(\lambda\partial_{r},\nu),
\end{equation*}
where $K$ is Gauss curvature and $\alpha>0$ is a constant, then $M^{n}$ is a slice $\{r_{0}\}\times N^{n}$ for some $r_{0}\in (0,\bar{r})$.
\end{corollary}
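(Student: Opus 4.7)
The plan is to deduce Corollary \ref{Th:K} directly from Theorem \ref{Th:mainthm} by passing from the $n$-homogeneous Gauss curvature $K=\kappa_1\cdots\kappa_n$ to its $1$-homogeneous root. Since $M^n$ is strictly convex, its principal curvatures lie in the positive cone $\Gamma_+\subset\mathbb{R}^n$, so I would set
\begin{equation*}
F(\mathcal{W})=K^{1/n}=(\kappa_1\cdots\kappa_n)^{1/n},\qquad \alpha':=n\alpha>0,
\end{equation*}
and rewrite the self-similar equation as $F^{-\alpha'}=\bar{g}(\lambda\partial_r,\nu)$, which is exactly the form required by Theorem \ref{Th:mainthm} with $\Gamma=\Gamma_+$.

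The only real task is to check that this $F$ satisfies Condition \ref{condtn} on $\Gamma_+$. Properties (i)--(iv) are immediate from the definition, with the useful formula $\partial F/\partial\kappa_i=F/(n\kappa_i)>0$ and the normalization $F(1,\ldots,1)=1$. For property (v), I would apply the AM--GM inequality twice: once to the reciprocals,
\begin{equation*}
\sum_{i=1}^{n}\frac{\partial F}{\partial\kappa_i}=\frac{F}{n}\sum_{i=1}^{n}\frac{1}{\kappa_i}\geq \frac{F}{n}\cdot\frac{n}{(\kappa_1\cdots\kappa_n)^{1/n}}=1,
\end{equation*}
and once to the curvatures themselves,
\begin{equation*}
\sum_{i=1}^{n}\frac{\partial F}{\partial\kappa_i}\kappa_i^{2}=\frac{F}{n}\sum_{i=1}^{n}\kappa_i\geq \frac{F}{n}\cdot n(\kappa_1\cdots\kappa_n)^{1/n}=F^{2}.
\end{equation*}
The last inequality of (v) is automatic: $F-(\partial F/\partial\kappa_i)\kappa_i=F(1-\tfrac{1}{n})=\tfrac{n-1}{n}F\geq 0$ because $n\geq 2$.

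With Condition \ref{condtn} verified, Theorem \ref{Th:mainthm} applies verbatim and forces $M^n$ to be a slice $\{r_0\}\times N^n$. There is no substantive obstacle in this argument; the only point worth flagging is the homogeneity rescaling, since $K$ itself is $n$-homogeneous and fails hypothesis (iv), so one must work with $K^{1/n}$ and absorb the factor of $n$ into the new exponent $\alpha'=n\alpha$.
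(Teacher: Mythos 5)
Your proposal is correct and coincides with the paper's approach: the paper proves this corollary by invoking Example \ref{Ex:sigman}, which verifies Condition \ref{condtn} for $\sigma_n^{1/n}=K^{1/n}$ on $\Gamma_+$, and then applies Theorem \ref{Th:mainthm} exactly as you do, with the homogeneity rescaling $\alpha'=n\alpha$ implicit. Your direct AM--GM verification of Condition \ref{condtn}(v) is an equivalent, slightly more explicit version of the paper's check.
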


We say that $f$ is \emph{inverse concave} if the function
\begin{equation*}
f^{*}(\kappa_{1},\dots,\kappa_{n}):=\frac 1{f(\frac{1}{\kappa_{1}},\dots,\frac{1}{\kappa_{n}})}
\end{equation*}
is concave. It is known that a class of functions are concave and inverse concave in \cite{Andrews07}, for example $(\frac{\sigma_{k}}{\sigma_{l}})^{\frac{1}{k-l}}$ where $\sigma_{k}$ is the $k$-th elementary polynomials and $0\leq l<k\leq n$. These convexity conditions of functions appear naturally in the study of curvature flows and other fully nonlinear PDEs (see, for example, \cite{Andrews07,Bian-Guan09,McCoy11}).

\begin{corollary}\label{Th:F}
Suppose $\overline{M}^{n+1}$ is under the same assumption of Theorem \ref{Th:mainthm} and $M^{n}$ is a closed, immersed hypersurface in $\overline{M}^{n+1}$. If $M^{n}$ is strictly convex and satisfies
\begin{equation*}
F^{-\alpha}=\bar{g}(\lambda\partial_{r},\nu),
\end{equation*}
where $F$ is concave, inverse concave and satisfies Condition \ref{condtn} (i)-(iv), $\alpha>0$ is a constant, then $M^{n}$ is a slice $\{r_{0}\}\times N^{n}$ for some $r_{0}\in (0,\bar{r})$.
\end{corollary}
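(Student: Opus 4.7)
The plan is to deduce Corollary \ref{Th:F} from Theorem \ref{Th:mainthm} by showing that, under the stronger hypotheses of concavity, inverse concavity, and strict convexity, the three inequalities in Condition \ref{condtn}(v) hold automatically. Take $\Gamma=\Gamma_{+}$, which is convex and contains $(1,\dots,1)$; the principal curvatures of the strictly convex $M^n$ lie in $\Gamma$, properties (i)--(iv) are assumed by hypothesis, and it remains only to verify (v). Once (v) is established, Theorem \ref{Th:mainthm} applies verbatim and delivers the slice conclusion without any additional geometric work.

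For the first inequality $\sum_i f_i\geq 1$, I would combine Euler's identity $\sum_i f_i(\kappa)\kappa_i=f(\kappa)$ for the $1$-homogeneous function $f$ with the supporting-hyperplane characterization of concavity along the segment from $\kappa$ to $(1,\dots,1)$:
\begin{equation*}
1=f(1,\dots,1)\leq f(\kappa)+\sum_i f_i(\kappa)(1-\kappa_i)=\sum_i f_i(\kappa).
\end{equation*}
For the second inequality $\sum_i f_i\kappa_i^2\geq f^2$, I would apply the same argument to the dual function $f^{*}$, which is concave by hypothesis, $1$-homogeneous, and normalized so that $f^{*}(1,\dots,1)=1$. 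A short computation gives
\begin{equation*}
\frac{\partial f^{*}}{\partial \kappa_i}(\kappa)=\frac{f_i(1/\kappa)}{\kappa_i^2\,f(1/\kappa)^2},
\end{equation*}
and concavity of $f^{*}$ at $(1,\dots,1)$ combined with Euler for $f^{*}$ yields $\sum_i (f^{*})_i(\kappa)\geq 1$; substituting the formula above and relabeling $\mu=1/\kappa$ turns this into $\sum_i f_i(\mu)\mu_i^2\geq f(\mu)^2$, as required. The third inequality $f-f_i\kappa_i\geq 0$ is immediate: strict convexity gives $\kappa_j>0$ and monotonicity (iii) gives $f_j>0$ for every $j$, so by Euler
\begin{equation*}
f-f_i\kappa_i=\sum_{j\neq i}f_j\kappa_j\geq 0.
\end{equation*}

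I do not expect a substantive obstacle. The entire argument is a standard exercise in duality for concave $1$-homogeneous functions together with Euler's identity; the only step that takes more than a single line is the computation of $(f^{*})_i$ and the change of variables $\mu=1/\kappa$. With Condition \ref{condtn}(v) verified on $\Gamma_{+}$, Corollary \ref{Th:F} reduces directly to Theorem \ref{Th:mainthm}.
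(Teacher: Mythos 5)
Your proof is correct and follows essentially the same route as the paper: the paper's proof of this corollary simply invokes Examples \ref{Ex:sigman} and \ref{Ex:candinc}, which verify Condition \ref{condtn}(v) on $\Gamma_{+}$ by exactly the three steps you give — the concavity supporting-hyperplane bound for $\sum_i f_i \geq 1$, the same bound applied to the inverse-concave dual $f^{*}$ (the paper evaluates at $\kappa^{*}=1/\kappa$, you change variables $\mu=1/\kappa$, which is the same computation) for $\sum_i f_i\kappa_i^2\geq f^2$, and Euler's identity with positivity on $\Gamma_{+}$ for $f-f_i\kappa_i\geq 0$ — before applying Theorem \ref{Th:mainthm}.
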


There are some connections between self-similar solutions and hypersurfaces of constant curvatures. Brendle \cite{Brendle13} proves closed and embedded hypersurfaces with constant mean curvature in a class of warped product manifolds must be umbilic. The case of hypersurfaces with constant $H_{k}$ is showed by Brendle and Eichmair \cite{Brendle-Eichmair13}. In these papers, Heintze-Kacher type inequality and Minkowski type formula in warped product manifolds are established, which can also be used to obtain uniqueness of self-similar solutions (see \cite{GM21}). Rigidity problems of hypersurfaces in warped product manifolds are also considered in \cite{Wu-Xia,Kwong-Lee-Pyo} etc.

Now, we briefly recall the methods in \cite{GM21} and compare them with the proofs in this paper. For example, let us consider the case $\overline{M}^{n+1}=\mathbb{R}^{n+1}$ and $M^{n}$ satisfies
\begin{equation}\label{Eq:H}
H^{-\alpha}=\langle X,\nu \rangle, 
\end{equation}
where $H=\frac{1}{n}(\kappa_{1}+...+\kappa_{n})$ is normalized mean curvature.

One method in \cite{GM21} is based on an integral inequality
\begin{equation}\label{Eq:0}
0=\int_{M^{n}}\mathrm{div}(HX^{T}-\nabla \langle X,\nu \rangle)\geq-(n-1)\int_{M^{n}}\langle X,\nabla H \rangle.
\end{equation}
If $M^{n}$ is strictly convex, using \eqref{Eq:H}, we know
\begin{equation}\label{Eq:X}
\langle X,\nabla H \rangle=-\frac{1}{\alpha}\langle X,\nu \rangle^{-\frac{\alpha+1}{\alpha}}\sum_{i}\kappa_{i}\langle X,e_{i} \rangle^{2}\leq 0.
\end{equation}
Combing \eqref{Eq:0} and \eqref{Eq:X}, we know $M^{n}$ is a sphere.

The other method uses the Heintze-Karcher inequality for embedded and mean convex $M^{n}$
\begin{equation*}
\int_{M^{n}}\langle X,\nu \rangle \leq \int_{M^{n}}\frac{1}{H}
\end{equation*}
and the Minkowski formula
\begin{equation*}
|M^{n}|=\int_{M^{n}}H\langle X,\nu \rangle.
\end{equation*}

Combining with \eqref{Eq:H}, we have
\begin{equation}\label{Eq:HK}
\int_{M^{n}}H^{-\alpha} \leq \int_{M^{n}}H^{-1}
\end{equation}
and
\begin{equation}\label{Eq:M}
|M^{n}|=\int_{M^{n}}H^{1-\alpha}.
\end{equation}
If $\alpha>1$, using H\"{o}lder inequality, we obtain
\begin{align*}
\int_{M^{n}}H^{1-\alpha}\int_{M^{n}}H^{-1}\leq |M_{n}|\int_{M^{n}}H^{-\alpha}.
\end{align*}
From \eqref{Eq:HK} and \eqref{Eq:M} we know that equality occurs in the above inequality. This implies $M^{n}$ is a sphere.

However, it seems that these integral methods can not be generalized for general curvature function $F$, except for the case $F=H_{k}$. This motivates us to seek a proof via the maximum principle.

In fact, for \eqref{Eq:H}, we introduce an auxiliary quantity
\begin{equation*}
P=\frac{|X|^{2}}{2}-\frac{\alpha}{\alpha+1}\langle X,\nu \rangle^{\frac{\alpha+1}{\alpha}},
\end{equation*}
which is similar to Weinberger's $P$-function \cite{Weinberger} in spirit. We notice
\begin{align*}
\Delta P&=\langle X,\nu \rangle^{\frac{\alpha+1}{\alpha}}(|h|^{2}-nH^{2})+\frac{1}{\alpha}\langle X,\nu \rangle^{\frac{1-\alpha}{\alpha}}\sum_{i}(nH-\kappa_{i})\kappa_{i}\langle X,e_{i} \rangle^{2} \\
&\geq \frac{1}{\alpha}\langle X,\nu \rangle^{\frac{1-\alpha}{\alpha}}\sum_{i}(nH-\kappa_{i})\kappa_{i}\langle X,e_{i} \rangle^{2},
\end{align*}
where $|h|^{2}=\sum_{i}\kappa_{i}^{2}$ and the last step is from the Cauchy-Schwarz inequality. Thus, function $P$ is subharmonic if $M^{n}$ is strictly convex, which implies $M^{n}$ is a sphere. Proofs of Theorem \ref{Th:H} and \ref{Th:mainthm} are based on this observation.

The paper is organized as follows. In Section \ref{Sec:prelim}, we show some examples of functions satisfying Condition \ref{condtn} and recall some facts of hypersurfaces in warped product manifolds. In Section \ref{Sec:P}, we derive a basic formula of auxiliary function $P$. In Section \ref{Sec:pfH}, we present the proof of Theorem \ref{Th:H}. In Section \ref{Sec:MainTHM}, we prove Theorem \ref{Th:mainthm} and its corollaries.

\section{Preliminaries}\label{Sec:prelim}

Throughout this paper, repeated indexes will be added up from $1$ to $n$ unless otherwise stated.

\subsection{Function $F$ and its defining cone $\Gamma$}

We recall some facts of symmetric functions for later calculations (see \cite{Gerhardt96,Spruck05} for example).

If matrix $\mathcal{W}=(h_{ij})$ is diagonal, i.e., $h_{ij}=\kappa_{i}\delta_{ij}$ for any $1\leq i,j\leq n$, then the following formula holds for function $F(\mathcal{W})=f(\kappa(\mathcal{W}))$:
\begin{equation*}
\frac{\partial F}{\partial h_{ij}}b_{ij}=\frac{\partial f}{\partial\kappa_{p}}b_{pp},\quad \text{for any symmetric matrix } B=(b_{ij}).
\end{equation*}

If function $f=f(\kappa)$ is $1$-homogeneous, differentiating $f(s\kappa)=sf(\kappa)$ with respect to $s$ gives $\frac{\partial f}{\partial\kappa_{i}}\kappa_{i}=f$.

Let $\sigma_{k}(\kappa)$ denote the $k$-th elementary symmetric polynomial of $\kappa=(\kappa_{1},...,\kappa_{n})$, i.e.,
\begin{equation*}
\sigma_{k}(\kappa)=\sum_{1\leq i_{1}<...<i_{k}\leq n}\kappa_{i_{1}}\cdots\kappa_{i_{k}}.
\end{equation*}
And let $\sigma_{k}(\kappa|i)$ denote $\sigma_{k}(\kappa)$ with $\kappa_{i}=0$ for a fixed $i$. We also set $\sigma_{0}(\kappa)=1$ and $\sigma_{k}(\kappa)=0$ if $k>n$ or $k<0$.

The G{\aa}rding's cone is defined by
\begin{equation*}
\Gamma_{k}:=\{\kappa\in\mathbb{R}^{n}|\sigma_{m}(\kappa)>0 \text{ for any }1\leq m\leq k\}.
\end{equation*}
We also consider the following cone
\begin{equation*}
\widetilde{\Gamma}_{k}:=\left\{\kappa\in\mathbb{R}^{n}\left|\begin{aligned}
&\sigma_{m}(\kappa)>0 \text{ for any }1\leq m\leq k-1,\\
&\sigma_{k}(\kappa|i)>-(k-1)\sigma_{k}(\kappa) \text{ for any } 1\leq i\leq n
\end{aligned}\right.\right\},
\end{equation*}
where $1\leq k\leq n-1$.

Combining $\sum_{i=1}^{n}\sigma_{k}(\kappa|i)=(n-k)\sigma_{k}(\kappa)$, inequalities $\sigma_{k}(\kappa|i)>-(k-1)\sigma_{k}(\kappa)$ implies $\sigma_{k}(\kappa)>0$ which means $\widetilde{\Gamma}_{k} \subset \Gamma_{k}$. Furthermore, noticing $\sigma_{k}(\kappa|i)>0$ in $\Gamma_{k+1}$, we know $\widetilde{\Gamma}_{k}$ has the following relation with G{\aa}rding's cones: $\Gamma_{k+1}\subset \widetilde{\Gamma}_{k} \subset \Gamma_{k}\subset \widetilde{\Gamma}_{k-1}$.

Now we show some examples of $F$ satisfying Condition \ref{condtn}.
\begin{example}\label{Ex:Hk}
Function $H_{k}^{\frac{1}{k}}(\kappa)$ in $\widetilde{\Gamma}_{k}$, where $H_{k}(\kappa)=\frac{1}{\binom{n}{k}}\sigma_{k}(\kappa)$ and $1\leq k\leq n-1$.
\end{example}

We only show Condition \ref{condtn} (v) since (i)-(iv) are easy to be checked. If $f=H_{k}^{\frac{1}{k}}$, then
\begin{align*}
\sum_{i=1}^{n}\frac{\partial f}{\partial \kappa_{i}}=\frac{1}{k}H_{k}^{\frac{1-k}{k}}\sum_{i=1}^{n}\frac{\partial H_{k}}{\partial\kappa_{i}}=H_{k}^{\frac{1-k}{k}}H_{k-1}\geq 1,
\end{align*}
where the last inequality is from MacLaurin inequality. And
\begin{align*}
\sum_{i=1}^{n}\frac{\partial f}{\partial\kappa_{i}}\kappa_{i}^{2}&=\frac{1}{k}H_{k}^{\frac{1-k}{k}}\sum_{i=1}^{n}\frac{\partial H_{k}}{\partial\kappa_{i}}\kappa_{i}^{2} \\
&=\frac{n}{k}H_{1}H_{k}^{\frac{1}{k}}-\frac{n-k}{k}H_{k}^{\frac{1-k}{k}}H_{k+1}. 
\end{align*}
From inequality $H_{1}H_{k}\geq H_{k+1}$ and MacLaurin inequality, we know
\begin{align*}
\sum_{i=1}^{n}\frac{\partial f}{\partial\kappa_{i}}\kappa_{i}^{2}\geq H_{1}H_{k}^{\frac{1}{k}} \geq H_{k}^{\frac{2}{k}}=f^{2}.
\end{align*}
We notice, for any index $1\leq i\leq n$ fixed,
\begin{align*}
f-\frac{\partial f}{\partial\kappa_{i}}\kappa_{i}=H_{k}^{\frac{1}{k}}\left(1-\frac{\sigma_{k-1}(\kappa|i)\kappa_{i}}{k\sigma_{k}(\kappa)}\right).
\end{align*}
From $\sigma_{k}(\kappa)=\sigma_{k}(\kappa|i)+\sigma_{k-1}(\kappa|i)\kappa_{i}$ and the definition of $\widetilde{\Gamma}_{k}$, we know
\begin{align*}
1-\frac{\sigma_{k-1}(\kappa|i)\kappa_{i}}{k\sigma_{k}(\kappa)}=\frac{(k-1)\sigma_{k}(\kappa)+\sigma_{k}(\kappa|i)}{\sigma_{k}(\kappa)}>0.
\end{align*}
Thus, we know $H_{k}^{\frac{1}{k}}(\kappa)$ in $\widetilde{\Gamma}_{k}$ satisfies Condition \ref{condtn}.

\begin{remark}
For function $H_{k}^{\frac{1}{k}}$, almost all requirements in Condition \ref{condtn} hold in $\Gamma_{k}$ except the inequality $f-\frac{\partial f}{\partial\kappa_{i}}\kappa_{i}\geq 0$ for any $1\leq i\leq n$. In fact, there are examples showing that $\Gamma_{k}$ is not sufficient. For the case $n=3$ and $k=2$, if $\kappa_{1}=-\frac{1}{2}$, $\kappa_{2}=1$ and $\kappa_{3}=\frac{3}{2}$, we can check such $\kappa\in \Gamma_{2}$ but $f-\frac{\partial f}{\partial\kappa_{3}}\kappa_{3}<0$ for $f=H_{2}^{\frac{1}{2}}$.
\end{remark}

\begin{example}\label{Ex:sigman}
Function $\sigma_{n}^{\frac{1}{n}}(\kappa)$ in $\Gamma_{n}=\Gamma_{+}$.
\end{example}

Inequalities $\sum_{i=1}^{n}\frac{\partial f}{\partial \kappa_{i}}\geq 1$ and $\sum_{i=1}^{n}\frac{\partial f}{\partial\kappa_{i}}\kappa_{i}^{2}\geq f^{2}$ can be checked similar to Example \ref{Ex:Hk}. If the defining cone $\Gamma=\Gamma_{+}$, $f-\frac{\partial f}{\partial\kappa_{i}}\kappa_{i}\geq 0$ follows from \[ f-\frac{\partial f}{\partial\kappa_{i}}\kappa_{i}=\sum_{j=1}^{n}\frac{\partial f}{\partial\kappa_{j}}\kappa_{j}-\frac{\partial f}{\partial\kappa_{i}}\kappa_{i}=\sum_{j\neq i}\frac{\partial f}{\partial\kappa_{j}}\kappa_{j}>0. \]

\begin{example}\label{Ex:candinc}
Concave and inverse concave function $F$ in $\Gamma_{+}$ which satisfies Condition \ref{condtn} (i)-(iv).
\end{example}

We only need to ensure inequalities $\sum_{i=1}^{n}\frac{\partial f}{\partial \kappa_{i}}\geq 1$ and $\sum_{i=1}^{n}\frac{\partial f}{\partial\kappa_{i}}\kappa_{i}^{2}\geq f^{2}$. The proofs can be found in \cite[Lemma 4 and 5]{Andrews-McCoy-Zheng} and we show them for the readers' convenience. Since $f$ is concave and $1$-homogeneous, we have
\begin{align*}
1=f(1,...,1)\leq f(\kappa)+\frac{\partial f}{\partial \kappa_{i}}(\kappa)(1-\kappa_{i})=\sum_{i}\frac{\partial f}{\partial \kappa_{i}}(\kappa).
\end{align*}
Let $\kappa^{*}=(\kappa_{1}^{*},\dots,\kappa_{n}^{*})$ where $\kappa_{i}^{*}=\frac{1}{\kappa_{i}}$ for $1\leq i\leq n$. Since $f$ is also inverse concave, i.e., $f^{*}(\kappa_{1},\dots,\kappa_{n})=\frac{1}{f(\kappa_{1}^{*},\dots,\kappa_{n}^{*})}$ is concave, we have
\begin{align*}
1\leq \sum_{i}\frac{\partial f^{*}}{\partial\kappa_{i}}(\kappa^{*})=\frac{1}{(f(\kappa))^{2}}\frac{\partial f}{\partial\kappa_{i}}(\kappa)\kappa_{i}^{2}.
\end{align*}

The following proposition shows that the class of functions satisfying Condition \ref{condtn} has some convexity properties.
\begin{proposition}\label{Th:ConvexClass}
If $f_{1}$ and $f_{2}$ satisfy Condition \ref{condtn} in the same defining cone $\Gamma\supset \Gamma_{+}$, then $\lambda f_{1}+(1-\lambda)f_{2}$ and $f_{1}^{\lambda}f_{2}^{1-\lambda}$ also satisfy Condition \ref{condtn} in $\Gamma$ for any $\lambda\in [0,1]$.
\end{proposition}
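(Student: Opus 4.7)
The plan is to verify Condition \ref{condtn} (i)-(v) for the two operations separately. For both the convex combination $f=\lambda f_1+(1-\lambda)f_2$ and the weighted geometric mean $f=f_1^\lambda f_2^{1-\lambda}$, properties (i)-(iv) are immediate: smoothness and symmetry are preserved, positivity uses $f_1,f_2>0$ on $\Gamma$, the normalization reads $\lambda+(1-\lambda)=1$ or $1^\lambda\cdot 1^{1-\lambda}=1$, and $1$-homogeneity of the product follows from $(sf_1)^\lambda(sf_2)^{1-\lambda}=s\,f_1^\lambda f_2^{1-\lambda}$. The monotonicity (iii) in the product case comes from the logarithmic derivative identity
\begin{equation*}
\frac{\partial f}{\partial\kappa_i}=f\left(\lambda\,\frac{1}{f_1}\frac{\partial f_1}{\partial\kappa_i}+(1-\lambda)\,\frac{1}{f_2}\frac{\partial f_2}{\partial\kappa_i}\right),
\end{equation*}
which is positive since each bracketed term is.

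For the sum, the inequalities $\sum_i \partial f/\partial\kappa_i\geq 1$ and $f-(\partial f/\partial\kappa_i)\kappa_i\geq 0$ in (v) pass through linearly from the corresponding inequalities for $f_1,f_2$. The remaining inequality $\sum_i(\partial f/\partial\kappa_i)\kappa_i^2\geq f^2$ reduces after applying the hypothesis to $f_1$ and $f_2$ to
\begin{equation*}
\lambda f_1^2+(1-\lambda)f_2^2\geq\bigl(\lambda f_1+(1-\lambda)f_2\bigr)^2,
\end{equation*}
which is just the convexity of $x\mapsto x^2$.

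For the product, the logarithmic derivative formula reduces each inequality in (v) to a combination of the hypotheses plus one application of the weighted AM-GM inequality $\lambda a+(1-\lambda)b\geq a^\lambda b^{1-\lambda}$. Concretely, property (v)(c) is immediate from
\begin{equation*}
f-\frac{\partial f}{\partial\kappa_i}\kappa_i=f\left(\lambda\,\frac{f_1-(\partial f_1/\partial\kappa_i)\kappa_i}{f_1}+(1-\lambda)\,\frac{f_2-(\partial f_2/\partial\kappa_i)\kappa_i}{f_2}\right)\geq 0.
\end{equation*}
Summing the logarithmic identity and using $\sum_i\partial f_j/\partial\kappa_i\geq 1$ yields $\sum_i\partial f/\partial\kappa_i\geq f\bigl(\lambda/f_1+(1-\lambda)/f_2\bigr)\geq f\cdot(1/f_1)^\lambda(1/f_2)^{1-\lambda}=1$. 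Multiplying by $\kappa_i^2$ before summing and using $\sum_i(\partial f_j/\partial\kappa_i)\kappa_i^2\geq f_j^2$ gives $\sum_i(\partial f/\partial\kappa_i)\kappa_i^2\geq f\bigl(\lambda f_1+(1-\lambda)f_2\bigr)\geq f\cdot f_1^\lambda f_2^{1-\lambda}=f^2$.

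The main obstacle, modest as it is, lies in (v)(b) for the product: one must exploit the second inequality of (v) for $f_1,f_2$ and then bound the resulting arithmetic mean from below by the geometric mean $f=f_1^\lambda f_2^{1-\lambda}$, which is precisely where weighted AM-GM enters. Everything else is bookkeeping around the logarithmic derivative, and no additional structural assumption on $\Gamma$ beyond $\Gamma\supset\Gamma_+$ (which guarantees $(1,\dots,1)\in\Gamma$ for the normalization) is needed.
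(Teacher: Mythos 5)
Your proof is correct and follows essentially the same route as the paper: linearity plus Jensen's inequality for the convex combination $\lambda f_1+(1-\lambda)f_2$, and the logarithmic-derivative identity $\frac{\partial f}{\partial\kappa_i}=f\bigl(\frac{\lambda}{f_1}\frac{\partial f_1}{\partial\kappa_i}+\frac{1-\lambda}{f_2}\frac{\partial f_2}{\partial\kappa_i}\bigr)$ combined with weighted AM--GM (Young's inequality) for $f_1^\lambda f_2^{1-\lambda}$. One small bonus: your display for $f-\frac{\partial f}{\partial\kappa_i}\kappa_i$ in the product case is the correct one, whereas the paper's corresponding display drops the factor $\kappa_i$ and records a mismatched right-hand side, so your version silently repairs a typo.
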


\begin{proof}
Denote $f=\lambda f_{1}+(1-\lambda)f_{2}$ and $\tilde{f}=f_{1}^{\lambda}f_{2}^{1-\lambda}$.

First, we show $f$ satisfies Condition \ref{condtn}. It is clear for (i)-(iv), so we only check (v). By direct calculations, we know
\begin{align*}
\sum_{i=1}^{n}\frac{\partial f}{\partial\kappa_{i}}=\lambda\sum_{i=1}^{n}\frac{\partial f_{1}}{\partial\kappa_{i}}+(1-\lambda)\sum_{i=1}^{n}\frac{\partial f_{2}}{\partial\kappa_{i}}\geq \lambda+1-\lambda=1
\end{align*}
and
\begin{align*}
f-\frac{\partial f}{\partial\kappa_{i}}\kappa_{i}=\lambda\left(f_{1}-\frac{\partial f_{1}}{\partial\kappa_{i}}\kappa_{i}\right)+(1-\lambda)\left(f_{2}-\frac{\partial f_{2}}{\partial\kappa_{i}}\kappa_{i}\right)\geq 0.
\end{align*}
And
\begin{align*}
\sum_{i=1}^{n}\frac{\partial f}{\partial\kappa_{i}}\kappa_{i}^{2}&=\lambda\sum_{i=1}^{n}\frac{\partial f_{1}}{\partial\kappa_{i}}\kappa_{i}^{2}+(1-\lambda)\sum_{i=1}^{n}\frac{\partial f_{2}}{\partial\kappa_{i}}\kappa_{i}^{2} \\
&\geq \lambda f_{1}^{2}+(1-\lambda)f_{2}^{2} \geq f^{2},
\end{align*}
where we use Jensen's inequality in the last step.

Next, we consider $\tilde{f}$. It is obvious for (i), (ii) and (iv). From \[ \frac{\partial\tilde{f}}{\partial\kappa_{i}}=\tilde{f}\left( \frac{\lambda}{f_{1}}\frac{\partial f_{1}}{\partial\kappa_{i}}+\frac{1-\lambda}{f_{2}}\frac{\partial f_{2}}{\partial\kappa_{i}} \right), \]
we see \[ \frac{\partial\tilde{f}}{\partial\kappa_{i}}>0 \quad\text{ and }\quad \tilde{f}-\frac{\partial\tilde{f}}{\partial\kappa_{i}}=\tilde{f}\left( \frac{1-\lambda}{f_{1}}\frac{\partial f_{1}}{\partial\kappa_{i}}+\frac{\lambda}{f_{2}}\frac{\partial f_{2}}{\partial\kappa_{i}} \right)>0 \] for any $i=1,\dots,n$. Furthermore,
\begin{align*}
\sum_{i=1}^{n}\frac{\partial\tilde{f}}{\partial\kappa_{i}}\geq \lambda \left( \frac{f_{2}}{f_{1}} \right)^{1-\lambda}+(1-\lambda)\left( \frac{f_{1}}{f_{2}} \right)^{\lambda}\geq 1
\end{align*}
and
\begin{align*}
\sum_{i=1}^{n}\frac{\partial\tilde{f}}{\partial\kappa_{i}}\kappa_{i}^{2} \geq \tilde{f}\big(\lambda f_{1}+(1-\lambda)f_{2}\big) \geq \tilde{f}^{2},
\end{align*}
where Young's inequality is used in the last steps of above inequalities.
\end{proof}

Proposition \ref{Th:ConvexClass} implies more examples satisfying Condition \ref{condtn} on suitable cones larger than the positive cone. For instance, $\lambda H_{k-1}+(1-\lambda)H_{k}$ and $H_{k-1}^{\lambda}H_{k}^{1-\lambda}$ both satisfy Condition \ref{condtn} in $\widetilde{\Gamma}_{k}$ for any $\lambda\in (0,1)$.

\subsection{Hypersurface in warped product manifold}
Let $M^{n}$ be a hypersurface in $\overline{M}^{n+1}$. We will calculate under an orthonormal frame $\{ e_{1},\dots,e_{n} \}$ on $M^{n}$ in this paper. Let $(h_{ij})$ denotes the second fundamental form. Under the orthonormal frame, $(h_{ij})$ equals to the matrix of shape operator $\mathcal{W}$. Then
\begin{equation*}
\nabla_{i}\nu=h_{ij}e_{j}.
\end{equation*}
By Codazzi equation,
\begin{equation}
\nabla_{i}h_{jl}=\nabla_{l}h_{ij}+\bar{R}_{\nu jli},
\end{equation}
where $\bar{R}_{\nu jli}=\bar{R}(\nu,e_{i},e_{l},e_{j})$ and $\bar{R}(\cdot,\cdot,\cdot,\cdot)$ is the $(0,4)$-Riemannian curvature tensor of $\overline{M}^{n+1}$.

Now we assume $\overline{M}^{n+1}=[0,\bar{r})\times_{\lambda} N^{n}$ is a warped product manifold. Let $\bar{\nabla}$ denote the Levi-Civita connection of $\overline{M}^{n+1}$. The vector field $\lambda(r)\partial_{r}$ satisfies
\begin{equation*}
\bar{\nabla}_{\xi}\lambda(r)\partial_{r}=\lambda'(r)\xi
\end{equation*}
for any vector field $\xi$ on $\overline{M}^{n+1}$. 

Define $\Phi(r):=\int_{0}^{r}\lambda(s)ds$. Then $\bar{\nabla}\Phi=\lambda\partial_{r}$.

\begin{lemma}\label{Th:Rivnu}
If $\overline{M}^{n+1}=[0,\bar{r})\times_{\lambda} N^{n}$ satisfies
\begin{equation*}
\mathrm{Ric}_{\scriptscriptstyle N}\geq (n-1)(\lambda'^{2}-\lambda\lambda'')g_{\scriptscriptstyle N},
\end{equation*}
then $\overline{\mathrm{Ric}}(\nu,\lambda\partial_{r}^{T})\leq 0$, where $\partial_{r}^{T}=\partial_{r}-\bar{g}(\partial_{r},\nu)\nu$.
\end{lemma}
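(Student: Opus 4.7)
The plan is to compute $\overline{\mathrm{Ric}}(\nu,\partial_r^T)$ explicitly using the warped product Ricci identities and then apply the hypothesis on $\mathrm{Ric}_N$. Recall the standard Ricci formulas for $\overline{M} = [0,\bar{r})\times_{\lambda} N$: for horizontal vectors $X,Y$ (tangent to the $N$-slices),
\begin{equation*}
\overline{\mathrm{Ric}}(\partial_r,\partial_r) = -\frac{n\lambda''}{\lambda}, \quad \overline{\mathrm{Ric}}(\partial_r,X) = 0, \quad \overline{\mathrm{Ric}}(X,Y) = \mathrm{Ric}_{\scriptscriptstyle N}(X,Y) - \bigl[(n-1)(\lambda')^2 + \lambda\lambda''\bigr] g_{\scriptscriptstyle N}(X,Y),
\end{equation*}
where on the right $X,Y$ are identified with their $N$-parts (so $\bar{g}$-norm and $g_{\scriptscriptstyle N}$-norm differ by a factor of $\lambda$).

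Next, I would decompose $\nu = a\partial_r + \nu^{H}$ with $a := \bar{g}(\nu,\partial_r)$ and $\nu^{H}$ horizontal of squared $\bar{g}$-norm $1-a^2$; then $\partial_r^T = \partial_r - a\nu = (1-a^2)\partial_r - a\nu^{H}$. Expanding by bilinearity and using vanishing of the mixed Ricci term,
\begin{equation*}
\overline{\mathrm{Ric}}(\nu,\partial_r^T) = -a\Bigl[(1-a^2)\frac{n\lambda''}{\lambda} + \overline{\mathrm{Ric}}(\nu^{H},\nu^{H})\Bigr].
\end{equation*}
Since $g_{\scriptscriptstyle N}(\nu^{H},\nu^{H}) = (1-a^2)/\lambda^2$, the assumption $\mathrm{Ric}_{\scriptscriptstyle N} \geq (n-1)(\lambda'^{2}-\lambda\lambda'')g_{\scriptscriptstyle N}$ yields
\begin{equation*}
\overline{\mathrm{Ric}}(\nu^{H},\nu^{H}) \geq \frac{1-a^2}{\lambda^2}\Bigl[(n-1)(\lambda'^{2}-\lambda\lambda'') - (n-1)(\lambda')^2 - \lambda\lambda''\Bigr] = -\frac{n\lambda''(1-a^2)}{\lambda},
\end{equation*}
so the bracket above is nonnegative. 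In the setting where this lemma is used (the proof of Theorem~\ref{Th:H}), the self-similar equation $H^{-\alpha} = \bar{g}(\lambda\partial_r,\nu)$ with $H>0$ forces $a > 0$; therefore $\overline{\mathrm{Ric}}(\nu,\lambda\partial_r^T) = -a\lambda \cdot [\text{nonneg}] \leq 0$.

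The only delicate points are to assemble the warped product curvature identities with the correct signs and to track the factor $1/\lambda^{2}$ when passing between $\bar{g}$ and $g_{\scriptscriptstyle N}$ on horizontal vectors; the rest is linear algebra plus one application of the Ricci hypothesis.
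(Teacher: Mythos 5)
Your proof is correct and follows the standard route via O'Neill's warped product Ricci formulas, which is the same approach as the reference \cite{GM21} that the paper cites in place of a proof. The curvature identities, the decomposition $\nu = a\partial_r + \nu^H$ with $\bar g(\nu^H,\nu^H)=1-a^2$, the $\lambda^2$ conversion between $\bar g$ and $g_{\scriptscriptstyle N}$, and the resulting cancellation to $-a\lambda\cdot[\text{nonneg}]$ are all assembled correctly; I also verified the intermediate formulas against the flat and spherical model cases. You are also right to flag that the lemma's conclusion actually needs the implicit sign condition $a=\bar g(\nu,\partial_r)\geq 0$, which is supplied in context by $u=\bar g(\lambda\partial_r,\nu)=F^{-\alpha}>0$; this is a genuine (if minor) imprecision in the lemma as stated, and your proof makes the dependence explicit.
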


\begin{proof}
A proof can be found in \cite[ Page 699]{GM21}.
\end{proof}

If we further assume $(N,g_{\scriptscriptstyle N})$ has constant sectional curvature $c$. Then
\begin{equation}\label{Eq:Rnu}
\bar{R}_{\nu jli}=-\left(\frac{\lambda''}{\lambda}+\frac{c-\lambda'^{2}}{\lambda^{2}}\right)(\delta_{ij}r_{l}-\delta_{jl}r_{i})r_{\nu},
\end{equation}
where $r_{i}=\bar{g}(\partial_{r},e_{i})$ and $r_{\nu}=\bar{g}(\partial_{r},\nu)$ (see \cite{ONeill}, or \cite{GM21} for convenience).

\section{Auxiliary function $P$}\label{Sec:P}

In this section, we assume that hypersurface $M^{n}$ in $\overline{M}^{n+1}=[0,\bar{r})\times_{\lambda} N^{n}$ satisfies
\begin{equation*}
F^{-\alpha}=\bar{g}(\lambda\partial_{r},\nu).
\end{equation*}
Denote $u:=\bar{g}(\lambda\partial_{r},\nu)$. And we still use $\Phi$ to denote its pull-back on $M^{n}$ by the immersion $M^{n}\rightarrow \overline{M}^{n+1}$. 

By direct calculation,
\begin{align}
\nabla_{j}u&=h_{jl}\bar{g}(\lambda\partial_{r},e_{l}), \label{Eq:Du} \\
\nabla_{i}\nabla_{j}u&=\nabla_{i}h_{jl}\bar{g}(\lambda\partial_{r},e_{l})+\lambda'h_{ij}-h_{il}h_{jl}\bar{g}(\lambda\partial_{r},\nu) \label{Eq:DDu} \\
&=\nabla_{l}h_{ij}\bar{g}(\lambda\partial_{r},e_{l})+\bar{R}_{\nu jli}\bar{g}(\lambda\partial_{r},e_{l})+\lambda'h_{ij}-uh_{il}h_{jl}.\nonumber
\end{align}

It is also easy to check that
\begin{align}
\nabla_{j}\Phi&=\bar{g}(\lambda\partial_{r},e_{j}),\\
\nabla_{i}\nabla_{j}\Phi&=\lambda'\delta_{ij}-h_{ij}\bar{g}(\lambda\partial_{r},\nu)=\lambda'\delta_{ij}-uh_{ij}, \label{Eq:DDPhi}
\end{align}
where $\delta_{ij}$ is the Kronecker symbol.

Define operator $\mathcal{L}:=F^{ij}\nabla_{i}\nabla_{j}$, where $F^{ij}=\frac{\partial F}{\partial h_{ij}}$. We consider the following auxiliary function \[ P:=\Phi-\frac{\alpha}{\alpha+1}u^{\frac{\alpha+1}{\alpha}}. \] Here $u>0$ is confirmed by $F^{-\alpha}=u$ and assumption of $F>0$ in Theorem \ref{Th:H} and \ref{Th:mainthm}.

\begin{lemma}\label{Th:LP}
Function $P$ satisfies the following equality:
\begin{align*}
\mathcal{L}P&= \lambda'(\sum_{i}F^{ii}-1)+u^{\frac{\alpha+1}{\alpha}}(F^{ij}h_{il}h_{jl}-F^{2})-u^{\frac{1}{\alpha}}F^{ij}\bar{R}_{\nu jli}\bar{g}(\lambda\partial_{r},e_{l}) \\
&\quad +\frac{1}{\alpha}\bar{g}(\lambda\partial_{r},\nabla \log u)-\frac{1}{\alpha}u^{\frac{\alpha+1}{\alpha}}F^{ij}\nabla_{i}\log u\nabla_{j}\log u.
\end{align*}
\end{lemma}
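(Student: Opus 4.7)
The proof is a direct computation splitting $\mathcal{L}P$ as $\mathcal{L}\Phi - \frac{\alpha}{\alpha+1}\mathcal{L}(u^{(\alpha+1)/\alpha})$. First I would apply \eqref{Eq:DDPhi} together with the $1$-homogeneity of $F$ (which gives $F^{ij}h_{ij}=F$) to obtain $\mathcal{L}\Phi=\lambda'\sum_{i}F^{ii}-uF$. Since $F^{-\alpha}=u$, one has $F=u^{-1/\alpha}$, hence $uF=u^{(\alpha+1)/\alpha}F^{2}$, and the first two terms of the claimed identity already start to appear.

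Next I would compute the second piece. Writing $v=u^{(\alpha+1)/\alpha}$, one has
\begin{equation*}
\nabla_{i}\nabla_{j}v=\frac{\alpha+1}{\alpha}\bigl[u^{1/\alpha}\nabla_{i}\nabla_{j}u+\tfrac{1}{\alpha}u^{(1-\alpha)/\alpha}\nabla_{i}u\nabla_{j}u\bigr],
\end{equation*}
so that after multiplying by $\frac{\alpha}{\alpha+1}$ only two terms remain. For the Hessian of $u$ I would substitute \eqref{Eq:DDu}, which after contraction with $F^{ij}$ gives four contributions: $F^{ij}\nabla_{l}h_{ij}\,\bar{g}(\lambda\partial_{r},e_{l})$, the curvature term $F^{ij}\bar{R}_{\nu jli}\,\bar{g}(\lambda\partial_{r},e_{l})$, $\lambda' F^{ij}h_{ij}=\lambda' F$, and $-uF^{ij}h_{il}h_{jl}$. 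The Codazzi-type piece simplifies because $F^{ij}\nabla_{l}h_{ij}=\nabla_{l}F$, and then the relation $\nabla_{l}F=-\frac{1}{\alpha}F\nabla_{l}\log u$ (from $F=u^{-1/\alpha}$) converts it into $-\frac{1}{\alpha}u^{-1/\alpha}\bar{g}(\lambda\partial_{r},\nabla\log u)$. Multiplying by $u^{1/\alpha}$ (as required by the computation of $\mathcal{L}v$) yields exactly the term $-\frac{1}{\alpha}\bar{g}(\lambda\partial_{r},\nabla\log u)$ in the final identity (with the opposite sign once the outer minus is applied).

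For the gradient-squared term I would use $\nabla_{i}u=u\nabla_{i}\log u$, turning $\frac{1}{\alpha}u^{(1-\alpha)/\alpha}F^{ij}\nabla_{i}u\nabla_{j}u$ into $\frac{1}{\alpha}u^{(\alpha+1)/\alpha}F^{ij}\nabla_{i}\log u\,\nabla_{j}\log u$, which is the final term of the stated formula. Collecting the pieces, cancelling the $\lambda'$ that appears both in $\mathcal{L}\Phi$ (as $\lambda'\sum_{i}F^{ii}$) and in $-\frac{\alpha}{\alpha+1}\mathcal{L}v$ (as $-\lambda'$), and rewriting $uF=u^{(\alpha+1)/\alpha}F^{2}$ to pair it with $F^{ij}h_{il}h_{jl}$, gives the stated formula.

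There is no real obstacle here: the computation is purely algebraic once \eqref{Eq:DDPhi}, \eqref{Eq:DDu}, the Codazzi identity, and the homogeneity relation $F^{ij}h_{ij}=F$ are in place. The mildly delicate step is book-keeping the exponents of $u$ and recognizing that the constraint $F^{-\alpha}=u$ is what ties the $\nabla_{l}F$ term produced by Codazzi to $\nabla\log u$, which is precisely what creates the $\frac{1}{\alpha}$ terms in the conclusion.
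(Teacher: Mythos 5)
Your proposal is correct and follows essentially the same computation as the paper: decompose $\mathcal{L}P = \mathcal{L}\Phi - \frac{\alpha}{\alpha+1}\mathcal{L}u^{(\alpha+1)/\alpha}$, use \eqref{Eq:DDPhi} and \eqref{Eq:DDu} together with $F^{ij}h_{ij}=F$ and the constraint $F = u^{-1/\alpha}$. The only (cosmetic) difference is that you spell out the conversion of the Codazzi term $u^{1/\alpha}\bar{g}(\lambda\partial_r,\nabla F)$ into $-\frac{1}{\alpha}\bar{g}(\lambda\partial_r,\nabla\log u)$, whereas the paper leaves that substitution implicit between the displayed intermediate line and the final identity.
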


\begin{proof}
By equality \eqref{Eq:DDu}, we obtain
\begin{equation}\label{Eq:Lu}
\mathcal{L}u=\bar{g}(\lambda\partial_{r},\nabla F)+F^{ij}\bar{R}_{\nu jli}\bar{g}(\lambda\partial_{r},e_{l})+\lambda'F-uF^{ij}h_{il}h_{jl}.
\end{equation}

Moreover, using equation $F^{-\alpha}=u$,
\begin{align*}
\frac{\alpha}{\alpha+1}\mathcal{L}u^{\frac{\alpha+1}{\alpha}}&=u^{\frac{1}{\alpha}}\mathcal{L}u+\frac{1}{\alpha}u^{\frac{1-\alpha}{\alpha}}F^{ij}\nabla_{i}u\nabla_{j}u \\
&=u^{\frac{1}{\alpha}}\bar{g}(\lambda\partial_{r},\nabla F)+u^{\frac{1}{\alpha}}F^{ij}\bar{R}_{\nu jli}\bar{g}(\lambda\partial_{r},e_{l})+\lambda' \\
&\quad -u^{\frac{\alpha+1}{\alpha}}F^{ij}h_{il}h_{jl}+\frac{1}{\alpha}u^{\frac{1-\alpha}{\alpha}}F^{ij}\nabla_{i}u\nabla_{j}u.
\end{align*}

By equality \eqref{Eq:DDPhi} and equation $F^{-\alpha}=u$, we know
\begin{equation}\label{Eq:Lphi}
\mathcal{L}\Phi=F^{ij}(\lambda'\delta_{ij}-uh_{ij})=\lambda'\sum_{i}F^{ii}-uF=\lambda'\sum_{i}F^{ii}-u^{\frac{\alpha+1}{\alpha}}F^{2}.
\end{equation}

Combining \eqref{Eq:Lu} and \eqref{Eq:Lphi}, we obtain
\begin{align*}
\mathcal{L}P&=\mathcal{L}\Phi-\frac{\alpha}{\alpha+1}\mathcal{L}u^{\frac{\alpha+1}{\alpha}}\\
&= \lambda'(\sum_{i}F^{ii}-1)+u^{\frac{\alpha+1}{\alpha}}(F^{ij}h_{il}h_{jl}-F^{2})-u^{\frac{1}{\alpha}}F^{ij}\bar{R}_{\nu jli}\bar{g}(\lambda\partial_{r},e_{l}) \\
&\quad +\frac{1}{\alpha}\bar{g}(\lambda\partial_{r},\nabla \log u)-\frac{1}{\alpha}u^{\frac{\alpha+1}{\alpha}}F^{ij}\nabla_{i}\log u\nabla_{j}\log u.
\end{align*}

\end{proof}

\section{Proof of Theorem \ref{Th:H}}
\label{Sec:pfH}

For case $F=H$, tensor $F^{ij}=\frac{1}{n}\delta_{ij}$ and operator $\mathcal{L}=\frac{1}{n}\Delta$. Then \[ \sum_{i}F^{ii}=1, \] \[ F^{ij}h_{il}h_{jl}-F^{2}=\frac{1}{n}|h|^{2}-H^{2} \] and \[ F^{ij}\bar{R}_{\nu jli}\bar{g}(\lambda\partial_{r},e_{l})=\frac{1}{n}\overline{\mathrm{Ric}}(\nu,\lambda\partial_{r}^{T}), \] where $\overline{\mathrm{Ric}}$ denotes the Ricci curvature tensor of $\overline{M}$ and $\partial_{r}^{T}$ is the tangent part of $\partial_{r}$.

Thus Lemma \ref{Th:LP} gives
\begin{align}
\frac{1}{n}\Delta P&= u^{\frac{\alpha+1}{\alpha}}(\frac{1}{n}|h|^{2}-H^{2})-\frac{1}{n}u^{\frac{1}{\alpha}}\overline{\mathrm{Ric}}(\nu,\lambda\partial_{r}^{T})  \label{Eq:DeltaP} \\
&\quad +\frac{1}{\alpha}\bar{g}(\lambda\partial_{r},\nabla \log u)-\frac{1}{n\alpha}u^{\frac{\alpha+1}{\alpha}}|\nabla\log u|^{2}. \nonumber
\end{align}

Notice
\begin{align*}
\nabla P=\lambda\partial_{r}^{T}-u^{\frac{1}{\alpha}}\nabla u.
\end{align*}
Then
\begin{equation}\label{Eq:nablaP}
\bar{g}(\nabla P,\nabla \log u)=\bar{g}(\lambda\partial_{r},\nabla \log u)-u^{\frac{\alpha+1}{\alpha}}|\nabla\log u|^{2}.
\end{equation}

From \eqref{Eq:DeltaP} and \eqref{Eq:nablaP}, we obtain
\begin{align*}
\Delta P-\frac{n}{\alpha}\bar{g}(\nabla P,\nabla \log u)&=u^{\frac{\alpha+1}{\alpha}}(|h|^{2}-nH^{2})-u^{\frac{1}{\alpha}}\overline{\mathrm{Ric}}(\nu,\lambda\partial_{r}^{T}) \\
&\quad +\frac{n-1}{\alpha}u^{\frac{\alpha+1}{\alpha}}|\nabla\log u|^{2}.
\end{align*}
Lemma \ref{Th:Rivnu} shows $u^{\frac{1}{\alpha}}\overline{\mathrm{Ric}}(\nu,\lambda\partial_{r}^{T})\leq 0$. Then inequality $|h|^{2}-nH^{2}\geq 0$ indicates \[ \Delta P-\frac{n}{\alpha}\bar{g}(\nabla P,\nabla \log u)\geq \frac{n-1}{\alpha}u^{\frac{\alpha+1}{\alpha}}|\nabla\log u|^{2}\geq 0. \]

By the strong maximum principle, we know $P$ is constant. Then the above inequality shows $\nabla u=0$ for $n>1$. Consequently, $\nabla r=\partial_{r}^{T}=\frac{1}{\lambda}(\nabla P+u^{\frac{1}{\alpha}}\nabla u)=0$ everywhere in $M^{n}$. This implies $r$ is constant in $M^{n}$ which means $M^{n}$ is a slice.

\section{Proofs of Theorem \ref{Th:mainthm} and its corollaries}
\label{Sec:MainTHM}

\begin{proof}[Proof of Theorem \ref{Th:mainthm}]

From equality \eqref{Eq:Rnu}, 
\begin{align*}
F^{ij}\bar{R}_{\nu jli}\bar{g}(\lambda\partial_{r},e_{l})&=-\lambda\left(\frac{\lambda''}{\lambda}+\frac{c-\lambda'^{2}}{\lambda^{2}}\right)F^{ij}(\delta_{ij}r_{l}-\delta_{jl}r_{i}) r_{\nu}r_{l} \\
&=-u\left(\frac{\lambda''}{\lambda}+\frac{c-\lambda'^{2}}{\lambda^{2}}\right)\Big((\sum_{l}r_{l}^{2})(\sum_{i}F^{ii})-F^{ij}r_{i}r_{j}\Big).
\end{align*}

At any fixed point, we notice
\begin{equation}\label{Eq:r}
(\sum_{l}r_{l}^{2})(\sum_{i}F^{ii})-F^{ij}r_{i}r_{j}=(\sum_{l}r_{l}^{2})(\sum_{i}f^{i})-f^{i}r_{i}^{2}\geq 0,
\end{equation}
where $f^{i}:=\frac{\partial f}{\partial\kappa_{i}}>0$. And the equality occurs if and only if $\partial_{r}^{T}=0$.

Combining with the assumption \[\frac{\lambda''}{\lambda}+\frac{c-\lambda'^{2}}{\lambda^{2}}\geq 0,\] we obtain \[F^{ij}\bar{R}_{\nu jli}\bar{g}(\lambda\partial_{r},e_{l})\leq 0.\]

Thus, from Lemma \ref{Th:LP}, we have the following inequality
\begin{align*}
\mathcal{L}P&\geq \lambda'(\sum_{i}F^{ii}-1)+u^{\frac{\alpha+1}{\alpha}}(F^{ij}h_{il}h_{jl}-F^{2})+\frac{1}{\alpha}\bar{g}(\lambda\partial_{r},\nabla \log u) \\
&\quad -\frac{1}{\alpha}u^{\frac{\alpha+1}{\alpha}}F^{ij}\nabla_{i}\log u\nabla_{j}\log u.
\end{align*}

Since
\begin{align*}
\nabla P=\lambda\partial_{r}^{T}-u^{\frac{\alpha+1}{\alpha}}\nabla \log u,
\end{align*}
we have
\begin{align*}
u^{-\frac{\alpha+1}{\alpha}}\bar{g}(\lambda\partial_{r},\nabla P)=u^{-\frac{\alpha+1}{\alpha}}\lambda^{2}\sum_{i}r_{i}^{2}-\bar{g}(\lambda\partial_{r},\nabla \log u)
\end{align*}
and
\begin{align*}
F^{ij}\nabla_{i}\log u\nabla_{j}P=\lambda F^{ij}r_{j}\nabla_{i}\log u-u^{\frac{\alpha+1}{\alpha}}F^{ij}\nabla_{i}\log u\nabla_{j}\log u.
\end{align*}

Then we know
\begin{align*}
& \mathcal{L}P+\frac{1}{\alpha}u^{-\frac{\alpha+1}{\alpha}}\bar{g}(\lambda\partial_{r},\nabla P)-\frac{1}{\alpha}F^{ij}\nabla_{i}\log u\nabla_{j}P \\
&\qquad \geq \lambda'(\sum_{i}F^{ii}-1)+u^{\frac{\alpha+1}{\alpha}}(F^{ij}h_{il}h_{jl}-F^{2}) \\
&\hspace{3em} +\frac{1}{\alpha u}(\lambda^{2}F\sum_{i}r_{i}^{2}-\lambda F^{ij}r_{j}\nabla_{i}u).
\end{align*}

At any fixed point, choosing a frame such that $h_{ij}=\kappa_{i}\delta_{ij}$ and using equality \eqref{Eq:Du}, we have
\begin{align*}
\lambda^{2}F\sum_{i}r_{i}^{2}-\lambda F^{ij}r_{j}\nabla_{i}u=\lambda^{2}\sum_{i}(f-f^{i}\kappa_{i})r_{i}^{2}.
\end{align*}
We also know \[ \sum_{i}F^{ii}=\sum_{i}f^{i} \] and \[ F^{ij}h_{il}h_{jl}=f^{i}\kappa_{i}^{2}. \]

Thus, the following inequality holds
\begin{align*}
& \mathcal{L}P+\frac{1}{\alpha}u^{-\frac{\alpha+1}{\alpha}}\bar{g}(\lambda\partial_{r},\nabla P)-\frac{1}{\alpha}F^{ij}\nabla_{i}\log u\nabla_{j}P \\
&\qquad \geq \lambda'(\sum_{i}f^{i}-1)+u^{\frac{\alpha+1}{\alpha}}(f^{i}\kappa_{i}^{2}-f^{2})+\frac{\lambda^{2}}{\alpha u}\sum_{i}(f-f^{i}\kappa_{i})r_{i}^{2}.
\end{align*}
Assumption $\lambda'>0$ and Condition \ref{condtn} (v) imply \[ \mathcal{L}P+\frac{1}{\alpha}u^{-\frac{\alpha+1}{\alpha}}\bar{g}(\lambda\partial_{r},\nabla P)-\frac{1}{\alpha}F^{ij}\nabla_{i}\log u\nabla_{j}P\geq 0. \]
From $\frac{\partial f}{\partial \kappa_{i}}>0$ in $\Gamma$, we know $F^{ij}$ is positive definite. By the strong maximum principle, we know $P$ is constant. It indicates inequality \eqref{Eq:r} is actually equality. Then $\partial_{r}^{T}=0$ implies $M^{n}$ is a slice.
\end{proof}

\begin{proof}[Proof of Corollary \ref{Th:Hk}]
By Lemma 2.3 in \cite{Li-Wei-Xiong}, we know that principal curvatures $\kappa\in \Gamma_{k+1}\subset \widetilde{\Gamma}_{k}$ from $H_{k+1}>0$. From Example \ref{Ex:Hk} in Section \ref{Sec:prelim}, we finish the proof by letting $F=H_{k}^{\frac{1}{k}}$ and $\Gamma=\widetilde{\Gamma}_{k}$ in Theorem \ref{Th:mainthm}.
\end{proof}

\begin{proof}[Proofs of Corollary \ref{Th:K} and \ref{Th:F}]
See Example \ref{Ex:sigman} and \ref{Ex:candinc} in Section \ref{Sec:prelim} and use Theorem \ref{Th:mainthm}.
\end{proof}

\end{document}